\documentclass[11pt]{article}

\usepackage[a4paper, margin={0.7in}]{geometry}
\usepackage[utf8]{inputenc}
\usepackage{lipsum}
\usepackage[all]{xy}
\usepackage{tikz-cd}
\usepackage{color}

\usepackage[T1]{fontenc}
\usepackage{tgpagella}
\usepackage{amssymb}
\usepackage{amsfonts}
\usepackage{amsmath}
\usepackage{amsthm}
\usepackage{amscd}
\usepackage{bussproofs}
\usepackage[all]{xy}
\usepackage{comment}
\usepackage{tikz-cd}
\usepackage{amssymb}
\usepackage{stmaryrd}
\usepackage{amsfonts}
\usepackage{amsmath}
\usepackage{enumitem}
\usepackage{color,mathdots,setspace,framed,graphicx,natbib,latexsym,nicefrac,longtable}
\usepackage[textwidth=3.5cm]{todonotes}

\newtheorem{tw}{Theorem}

\newtheorem{lemat}[tw]{Lemma}

\newtheorem{claim}[tw]{Claim}
\theoremstyle{definition}

\newtheorem{przyklad}[tw]{Example}
\newtheorem{uwaga}[tw]{Remark}

\newcommand{\PA}{\textnormal{\textsf{PA}}}

\newcommand{\md}[1]{\mathcal{#1}}

\newcommand{\M}{\md{M}}

\title{Corrigendum \& Addendum to: 

\emph{Categoricity-like Properties in the First Order Realm}}
\author{Ali Enayat\thanks{Department of Philosophy, Linguistics, and Theory of Science, University of Gothenburg, Sweden \texttt{(e-mail:~ali.enayat@gu.se)}. Research supported by the National Science Centre, Poland (NCN), grant number 2019/34/A/HS1/00399.}      
    \and  
    Mateusz \L{}e\l{}yk\thanks{Faculty of Philosophy, University of Warsaw, Poland (\texttt{e-mail:~mlelyk@uw.edu.pl}). Research supported by the National Science Centre, Poland (NCN), grant number 2022/46/E/HS1/00452.}
    }

\begin{document}

\maketitle

\section{Introduction}This note complements our paper \cite{Enayat_Łełyk_2024}, which we assume the reader has at hand for ready reference. Sections 2 and 3 constitute the "Corrigendum'' component; the former repairs the proof of Theorem 39 of \cite{Enayat_Łełyk_2024}, while the latter presents a counterexample to one of the lemmata used in the proof of Theorem 77 of \cite{Enayat_Łełyk_2024}. Section 3 is the "Addendum" component, in which we draw attention to relevant recent advances. 


\section{Correction to Theorem 39 of \cite{Enayat_Łełyk_2024}}

The statement of Theorem 39 of \cite{Enayat_Łełyk_2024} is correct, but the proof presented for it suffers from two defects. The more serious defect is that a stronger form of Lemma 40 of \cite{Enayat_Łełyk_2024} is needed for the proof of the theorem to go through; this stronger result is Theorem 3 below; whose part (b) is crucial for the proof of Theorem 39 of \cite{Enayat_Łełyk_2024}. The other defect, detected by Zachiri McKenzie, concerns the quantifier complexity of the axioms of the version of $\mathsf{KP}$ (Kripke-Platek set theory) used in the proof: The correct complexity is $\Pi_3$, and not $\Pi_2$.\footnote{With the exception of $\mathsf{Coll}(\Delta_0)$ (which is equivalent to $\mathsf{Coll}(\Sigma_1)$ in the presence of Kuratowski pairing), all the other axioms of $\mathsf{KP}$ can be expressed as $\Pi_2$-sentences; however, as shown by a recent result of McKenzie \cite[Theorem 4.9]{McKenzie-Kaufmann-Clote}, $\mathsf{Coll}(\Delta_0)$ cannot even be expressed as set of $\Sigma_3$-sentences. Note that one can use a $\Pi_3$-satisfaction predicate for $\Pi_3$-formulae to show that $\mathsf{KP}$ is axiomatized by a single $\Pi_3$-sentence, but we do not need this refinement here.} For the benefit of the reader, the entire corrected proof of the theorem is presented below.

\begin{tw}\label{Failure_of_tightness_for_bounded_ZF} If $\mathsf{ZF}$ is consistent, then for each $n\in\omega$ the following hold:
 \begin{enumerate}
 \item [(a)]$\mathsf{ZF}_{\Pi_n}$ is not solid.
 \item [(b)] $\mathsf{ZF}_{\Pi_n}$ is not tight.
 \end{enumerate}
\end{tw}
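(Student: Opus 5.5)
Solidity implies tightness (as recalled in \cite{Enayat_Łełyk_2024}), so (a) follows from (b). It therefore suffices to exhibit, for each $n$, two bi-interpretable extensions of $\mathsf{ZF}_{\Pi_n}$ in the same language that are not equal. Let $\mathsf{ZF}_{\Pi_n}$ be $\mathsf{KP}$ together with all $\Pi_n$-instances of Separation and Collection, as in the original paper. Following the original strategy, I take a theory $T$ and a sentence $\varphi$, and compare $U_1 = \mathsf{ZF}_{\Pi_n} + \varphi$ with $U_2 = \mathsf{ZF}_{\Pi_n} + \neg\varphi$. Each should interpret the other via a definable inner-model-like construction whose back-and-forth composition is definably isomorphic to the identity.

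Concretely, I use a rank-initial segment of a model of $\mathsf{ZF}$. For $n \geq 1$, fix a model $M \models \mathsf{ZF} + V=L$. By reflection, choose $\alpha$ such that $L_\alpha \prec_{\Sigma_{n+3}} L$. Then $L_\alpha$ satisfies $\mathsf{ZF}_{\Pi_n}$ but does not satisfy full $\mathsf{ZF}$. The two theories I compare are $U = \mathsf{ZF}_{\Pi_n} + V=L + \mathsf{Con}(\mathsf{ZF}_{\Pi_n})$-style statements versus the same theory with a twist. More robustly, I would use the standard trick for non-tightness of bounded fragments. A model of $\mathsf{ZF}_{\Pi_n}+V=L$ defines, via $\Sigma_{n+1}$ satisfaction, a copy of $L_\beta$ for suitable $\beta$, and conversely. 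The task is to arrange this so that the truth of some $\Pi_{n+3}$ sentence differs between the two sides.

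The key steps, in order, are as follows.
\begin{enumerate}
\item[(1)] Verify that $\mathsf{KP}$ is $\Pi_3$-axiomatized. Hence $\mathsf{ZF}_{\Pi_n}$ is $\Pi_{n+3}$-axiomatized, which gives the correct complexity bound mentioned in the preamble.
\item[(2)] Use the strengthened Lemma 40, namely Theorem 3(b). It supplies, for a sufficiently elementary $L_\alpha$, a definable structure satisfying $\mathsf{ZF}_{\Pi_n}$ that is not elementarily equivalent to the ambient model, while remaining mutually interpretable with it by parameter-free definitions.
\item[(3)] Check that the two interpretations compose to definable isomorphisms. The argument is that $V=L$ holds in both structures, so the canonical well-ordering makes every element definable from ordinal codes; identification of constructible hierarchies then gives the isomorphism.
\item[(4)] Conclude that the two resulting complete-ish theories are bi-interpretable but distinct, which yields (b) and hence (a).
\end{enumerate}

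The main obstacle is step (2)/(3): guaranteeing that the interpreted model truly satisfies all $\Pi_n$-Collection instances, and not merely their $\Pi_{n+2}$ consequences. This is exactly where the original Lemma 40 was too weak. I would first try to get Collection from $\Sigma_{n+3}$-elementarity of $L_\alpha$ in $L$, using the $\Pi_3$ complexity of $\mathsf{KP}$ to ensure the axioms transfer down.
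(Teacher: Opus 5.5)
There is a genuine gap. Your reduction of (a) to (b) via ``solid $\Rightarrow$ tight'' is fine. After that, however, the proposal never produces two extensions of $\mathsf{ZF}_{\Pi_n}$ or the interpretations between them.

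\begin{itemize}
\item The $L_\alpha\prec_{\Sigma_{n+3}}L$ construction is not tied to any bi-interpretation, since nothing interprets $L$ back inside $L_\alpha$.
\item ``The same theory with a twist'' is not a theory.
\item Step (3) fails as stated. $V=L$ gives a definable well-order and definability from ordinal parameters, but not pointwise definability. It also gives no definable isomorphism between a structure and its doubly interpreted copy.
\item Step (2) misreads Theorem 3(b). It does not provide a structure inside $L_\alpha$ that is mutually interpretable with it. It says $K_n(K_n(\mathcal{M}))=K_n(\mathcal{M})$, i.e.\ the model of $\Sigma_n$-definable elements is pointwise $\Sigma_n$-definable \emph{in itself}.
\item The defect of the old Lemma 40 was not about Collection holding in the interpreted model. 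In the paper, $\mathsf{ZF}_{\Pi_n}$ is the set of $\Pi_n$-theorems of $\mathsf{ZF}$, not $\mathsf{KP}$ plus $\Pi_n$-instances. So $K_n(\mathcal{M})\models\mathsf{ZF}_{\Pi_{n+1}}$ already follows from the $\Sigma_n$-elementarity in part (a).
\end{itemize}

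The missing idea is to route the bi-interpretation through $\mathbb{N}$.

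\begin{enumerate}
\item Fix a countable $\mathcal{M}_0\models\mathsf{ZF+V=L+\lnot Con_{ZF}}$ whose elementary diagram is arithmetical. Then $K_n(\mathcal{M}_0)$ is interpretable in $\mathbb{N}$.
\item For $n\geq 3$, $K_n(\mathcal{M}_0)\models\mathsf{KP}$; this is where the $\Pi_3$-axiomatization of $\mathsf{KP}$ is actually used. Hence $K_n(\mathcal{M}_0)$ carries a $\Sigma_n$-satisfaction predicate.
\item $K_n(\mathcal{M}_0)$ is $\omega$-nonstandard, because it contains the least proof of $\bot$ from $\mathsf{ZF}$.
\item By 3(b), $i\in\omega^{K_n(\mathcal{M}_0)}$ is nonstandard iff every element is defined by a $\Sigma_n$-formula with code below $i$. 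So the standard cut is definable, and $\mathbb{N}$ is interpretable in $K_n(\mathcal{M}_0)$.
\item Pointwise definability on both sides turns these two interpretations into a bi-interpretation. The argument is uniform, so $\mathrm{Th}(K_n(\mathcal{M}_0))$ is bi-interpretable with $\mathrm{Th}(\mathbb{N})$.
\item Do this for $n$ and for a sufficiently large $k$ (e.g.\ $k\geq n+3$). This yields two extensions of $\mathsf{ZF}_{\Pi_n}$ that are bi-interpretable through $\mathrm{Th}(\mathbb{N})$.
\item The two extensions are distinct because $\mathsf{Coll}(\Sigma_{n+1})$ holds in $K_k(\mathcal{M}_0)$ but fails in $K_n(\mathcal{M}_0)$ by 3(c). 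The failure is witnessed by the $\Sigma_{n+1}$-definable map sending each $\Sigma_n$-formula to its $<_{\mathsf{L}}$-least witness. Its domain is a set, while by 3(b) its range is the whole model.
\item Small $n$ are covered because non-tightness passes to subtheories.
\end{enumerate}
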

\begin{proof} Our proof strategy is similar to the one for Theorem 25 of \cite{Enayat_Łełyk_2024}. Assuming the consistency of $\mathsf{ZF}$, by G\"{o}del's second incompleteness theorem, and G\"{o}del's relative consistency proof of $\mathsf{ZF + V = L}$, the theory $\mathsf{ZF+V=L+\lnot Con_{ZF}}$ is consistent, and therefore by the arithmetized completeness theorem there is a countable model $\mathcal{M}_0$ such that:
\begin{enumerate}
\item[(1)] $\mathcal{M}_0\models \mathsf{ZF+V=L+\lnot Con_{ZF}}$, and
\item[(2)] The elementary diagram of $\mathcal{M}_0$ is definable in $\mathbb{N}$ (the standard model of arithmetic); in particular, there is an interpretation $\mathcal{I}$ such that $\mathbb{N}\trianglerighteq^{\mathcal{I}}\mathcal{M}_0.$
\end{enumerate}
\noindent As in the arithmetical case, given $n \in \omega$, let $K_{n}(\mathcal{M}_0)$ be the submodel of $\mathcal{M}_0$ whose universe consists of elements of $\mathcal{M}_0$ that are definable in $\mathcal{M}_0$ by a $\Sigma_{n}$-formula. At the heart of the proof of Theorem 1 is the following result, which is the set-theoretical analogue of Theorem 24 of \cite{Enayat_Łełyk_2024}.

\begin{tw}
    
\label{N_and_K_n_are_biint_for_ZF+V=L} Let $\M_0$ and $K_{n}(\mathcal{M}_0)$ be as above. The following statements hold for each $n\geq 3$:
\begin{enumerate}
\item [(a)] $\mathbb{N}$ is bi-interpretable with $K_{n}(\mathcal{M}_0).$

\item [(b)] $\mathrm{Th}(\mathbb{N})$\ is bi-interpretable with $\mathrm{Th}(K_{n}(\mathcal{M}_0)).$
\end{enumerate}
\end{tw}

\noindent The proof of part (a) of Theorem \ref{N_and_K_n_are_biint_for_ZF+V=L} is based on Theorem  \ref{Set_theoretical_analogue_of_Paris_Kirby} below, which is the set-theoretical analogue of a basic result in the model theory of arithmetic (Theorem 22 of \cite{Enayat_Łełyk_2024}). The proof of this set-theoretical analogue takes more effort to establish since in the arithmetical case the well-ordering $<$ of the universe has a $\Delta_0$-graph, but in our context we need to rely on technical features of the well-ordering $<_{\mathsf{L}}$ of the constructible universe $\mathsf{L}$. 

\noindent In part (c) of the theorem below, $\mathsf{Coll}(\Sigma_{n+1})$ is the collection scheme for $\Sigma_{n+1}$-formulae of set theory, whose instances are of the form:
$$\left( \forall x\in a\ \exists y\ \sigma(x,y)\right) \rightarrow \left(
\exists b\ \forall x\in a\ \exists y \in b ~\sigma(x,y)\right),$$ where $\sigma$ is a $\Sigma_{n+1}$-formula whose parameters are suppressed. Also note that in the statement of the theorem below, $\mathsf{ZF^-}$ denotes $\mathsf{ZF\setminus\{Powerset\}}$. It is well-known that the powerset axiom is not needed for the development of the basic properties of the constructible universe (see, e.g., Section 5 of Chapter II of \cite{Barwise_book}).


\begin{tw} \label{Set_theoretical_analogue_of_Paris_Kirby} The following holds for any model $\mathcal{M}\models \mathsf{ZF^{-} + V = L}$, where $n\geq 3$.\footnote{As pointed out by Zachiri McKenzie, in Lemmas 5.15 and 5.19 of \cite{Mathias-maclane}, Mathias established an \emph{internal version} of Theorem 3 within models of certain fragments of $\mathsf{ZF+V=L}$.}

\begin{enumerate}

\item [(a)] $K_{n}(\mathcal{M})\prec _{\Pi _{n}}\mathcal{M}$, hence $K_{n}(\mathcal{M}%
)\models \mathrm{Th}_{\Pi _{n+1}}(\mathcal{M}).$

\item [(b)] $K_n(K_{n}(\mathcal{M})) = K_{n}(\mathcal{M})$, i.e., every element of $K_{n}(\mathcal{M})$ is definable in $K_{n}(\mathcal{M})$ by some $\Sigma _{n}$-formula.\footnote{Note that in contrast with Theorem 22 of \cite{Enayat_Łełyk_2024}, here there is no nonstandardness assumption.}

\item [(c)]  $K_{n}(\mathcal{M})\models \mathsf{ZF}_{\Pi _{n+1}}
+\lnot \mathsf{Coll}(\Sigma_{n+1}).$

\end{enumerate}
\end{tw}
 \begin{proof}[Proof of Theorem \ref{Set_theoretical_analogue_of_Paris_Kirby}] Let $n\geq 3$, and $\mathcal{M}\models \mathsf{ZF^-}+%
\mathsf{V}=\mathsf{L}$. Also, let $<_{\mathsf{L}}$ denote the usual
well-ordering of the constructible universe, which is well-known to be
definable by a $\Sigma _{1}$-formula within the theory $\mathsf{ZF^-}+\mathsf{V%
}=\mathsf{L}$. To show that $K_{n}(\mathcal{M})\prec _{\Sigma _{n}}\mathcal{M%
}$, by the Tarski-test for elementarity, and the availability of a $\Delta
_{0}$-definable pairing function (such as Kuratowski's) for coding finite
tuples of elements as a single element, it suffices to show:\medskip

\noindent Claim $(\ast )$. If $c\in K_{n}(\mathcal{M})$, and $\mathcal{M}%
\models \exists x\ \sigma (x,c)$ for some $\Sigma _{n}$-formula $\sigma
(x,y) $, then there is some $d\in K_{n}(\mathcal{M})$ such that $\mathcal{M}%
\models \sigma (c,d).$ \medskip

\noindent To verify Claim $(\ast )$, suppose that $c\in K_{n}(\mathcal{M})$, and $%
\mathcal{M}\models \exists x\ \sigma (x,c)$, where $\sigma (x,c)=\exists y\
\pi (x,y,c)$ and $\pi $ is a $\Pi _{n-1}$-formula. Thus we have: 
\begin{equation*}
\mathcal{M}\models \exists x\ \exists y\ \pi (x,y,c).
\end{equation*}%
Using the Kuratowski pairing function $\left\langle x,y\right\rangle=\{\{x\},\{x,y\}\} $, we
can write the above as: 
\begin{equation*}
\mathcal{M}\models \exists z\ \pi (\left( z\right) _{0},\left( z\right)
_{1},c)\text{,}
\end{equation*}%
where $z=\left\langle \left( z\right) _{0},\left( z\right) _{1}\right\rangle 
$. By invoking the fact that $<_{\mathsf{L}}$ well-orders the universe
within $\mathcal{M}$, we can conclude that for some $d\in M$, $\mathcal{%
M}\models \psi (d,c)\wedge \exists !z\ \psi (z,c)$, where: 
\begin{equation*}
\psi (z,a):=\overset{\psi _{1}(z,c)}{\overbrace{\pi (\left( z\right)
_{1},\left( z\right) _{2},c)}}\wedge \overset{\psi _{2}(z,c)}{\overbrace{%
\forall u<_{\mathsf{L}}z\ \lnot \pi (\left( u\right) _{1},\left( u\right)
_{2},c)}}.
\end{equation*}

\begin{itemize}
\item We will show that $d$ is $\Sigma_n$-definable in the structure $(\mathcal{M},c)$ by showing that $\psi(z,c)$ is equivalent within $\mathcal{M}$ to a $\Sigma _{n}$-formula $\theta(z,c)$ with parameter $c$. This is sufficient to prove Claim $(\ast )$ since we can then take advantage of the assumption that $c\in K_{n}(\mathcal{M})$ to eliminate the parameter $c$ to obtain a paramater-free $\Sigma _{n}$-formula $\psi ^{\ast }(z)$ that defines $d$ in $\mathcal{M}$. More specifically:%
\begin{equation*}
\psi ^{\ast }(z):=\exists v\left[ \theta (v)\wedge \psi (z,v)\right] ,
\end{equation*}%
where $\theta (v)$ is a $\Sigma _{n}$-formula that defines $c$ in $\mathcal{M%
}$. \medskip
\end{itemize}

\noindent The first conjunct $\psi _{1}(z,c)$ of $\psi (z,c)$ is equivalent to 
\begin{equation*}
\exists w_{1}\ \exists w_{2}\ \left[ w=\left( z\right) _{1}\wedge w=\left(
z\right) _{2}\wedge \pi (\left( z\right) _{1},\left( z\right) _{2},c)\right]
,
\end{equation*}%
since $w=\left( z\right) _{i}$ is a $\Delta _{0}$-formula for $i\in \{0,1\}.$ By assumption $\pi $ is a $\Pi _{n-1}$-formula, thus
$\psi _{1}(z,c)$ is equivalent in $\mathcal{M}$ to
a $\Sigma _{n}$-formula. Thus the proof will be complete once we verify that
the second conjunct $\psi _{2}(z,c)$ of $\psi (z,c)$ is also equivalent to a 
$\Sigma _{n}$-formula in $\mathcal{M}$ (with parameter $c$).\footnote{%
Note that $\psi _{2}(z,c)$ is readily expressible as $\Pi _{n}$-formula
(when $n\geq 3$) since $<_{\mathsf{L}}$ has a $\Sigma _{1}$-definition, but
this fact does not help us here.} \ Indeed, we will do better by showing that $\psi (z,c)$
is equivalent to a $\Sigma _{n-1}$-formula in $\mathcal{M}$ (with parameter $%
c$). For this purpose, we need to revisit the specific way that the
canonical well-ordering $<_{\mathsf{L}}$ of the constructible universe $%
\mathrm{L}$ was designed by G\"{o}del. The following two facts are
well-known, see pp.~188-190 of Jech's textbook \cite{Jech_Set_Theory} for an exposition:

\begin{enumerate}
\item[(1)] $\mathsf{L:}=\bigcup\limits_{\alpha \in \mathsf{Ord}}L_{\alpha }$%
, and $<_{\mathsf{L}}\ :=\bigcup\limits_{\alpha \in \mathsf{Ord}}<_{\alpha }$,
where $<_{\alpha }$ well-orders $L_{\alpha }$ for each $\alpha $, and the
maps $$\alpha \mapsto L_{\alpha } \mathrm{ ~and~ } \alpha \mapsto <_{\alpha }$$ are both $%
\Delta _{1}^{\mathsf{ZF^-}}$.

\item[(2)] If $\alpha \in \beta \in \mathsf{Ord}$, then $<_{\alpha }$ is
\emph{end extended} by $<_{\beta }$, i.e., 
\begin{enumerate}
\item[$(i)$] $(x<_{\alpha }y)\rightarrow $ $%
(x<_{\beta }y)$; and \item[$(ii)$] $(x\in L_{\alpha })\wedge (y\in L_{\beta }\setminus
L_{\alpha })\rightarrow (x<_{\beta }y)$. 
\end{enumerate}Note that this implies that if $%
z\in L_{\alpha }$ and $u<_{\mathsf{L}}z$, then $u\in L_{\alpha }.$
\end{enumerate}

\noindent Facts (1) and (2) imply that $\mathcal{M}\models \forall u~ [(u<_{\mathsf{L}}z)\leftrightarrow (u<_{\alpha}z)]$, where $u<_{\alpha}z$ is shorthand for $\left\langle u,z\right\rangle \in~ <_{\alpha}.$ This makes it clear that $\psi _{2}(z,c)$ is
equivalent to the following formula:

\begin{equation*}
\exists \alpha \ \exists r\ \exists w\ \left[ \mathsf{Ord}(\alpha )\wedge
\left( r=\ <_{\alpha }\right) \wedge \left( w=L_{\alpha }\right) \wedge
\left( z\in w\right) \wedge \forall u\in w\ \left( (\left\langle
u,z\right\rangle \in r\rightarrow \lnot \pi (u,c)\right) \right] .
\end{equation*}%
By inspection, the above is $\Sigma _{n-1}$ (in
parameter c), as shown by the following calculation:\medskip 

\begin{center}
$\overset{\Sigma _{n-1}\text{\ }(n\geq 3)}{\overbrace{\exists \alpha \
\exists r\ \exists w\ \left[ \overset{\Delta _{0}}{\overbrace{\mathsf{Ord}%
(\alpha )}}\wedge \ \overset{\Delta _{1}}{\overbrace{\left( r=\ <_{\alpha
}\right) }}\wedge \ \overset{\Delta _{1}}{\overbrace{\left( w=L_{\alpha
}\right) }}\wedge \overset{\Delta _{0}}{\overbrace{\left( z\in w\right) }}%
\wedge \overset{\Sigma _{n-1}\text{\ }(\mathrm{by\ }\mathrm{Collection,~}n\geq 3)}{\overbrace{\forall u\in w\ \overset{%
\Sigma _{n-1}\text{\ }(n\geq 3)}{\overbrace{\left( (\overset{\Sigma _{1}}{%
\overbrace{\left\langle u,z\right\rangle \in r}}\rightarrow \overset{\Sigma
_{n-1}}{\overbrace{\lnot \pi (u,c)}}\right) }}}}\right] }}.$
\end{center}

\noindent This concludes the proof of part (a) of Theorem \ref{Set_theoretical_analogue_of_Paris_Kirby}. Note that part (b) is an immediate consequence of part (a). So we now turn to the proof of (c). 

In order to demonstrate the failure of  $\mathsf{Coll}(\Sigma_{n+1})$ in $K_{n}(\mathcal{M})$ we first we need to address the question of definability of $\Sigma_k$-satisfaction within \textit{fragments} of $\mathsf{ZF}$ (it has been known since Levy's pioneering work that within full $\mathsf{ZF}$ there is a $\Sigma_k$-satisfaction class that is $\Sigma_k$-definable for each $k\geq 1$).  As readily seen by examining Definitions 2.9 and 2.10 of McKenzie's \cite{McKenzie-collection}, within $\mathsf{KP}$, there is a $\Sigma_k$-satisfaction class that is $\Sigma_k$-definable for each $k\geq 1$; more detail for this construction is provided by Mathias in \cite[p.~156]{Mathias-maclane}. Here $\mathsf{KP}$ denotes Kripke-Platek set theory. In our formulation (following recent practice, led by Mathias \cite{Mathias-maclane}) the scheme of foundation is limited to $\Pi _{1}$-formulae (equivalently: the scheme of $\in $-induction for $\Sigma _{1}$-formulae). Thus in contrast to Barwise's $\mathsf{KP}$ in 
\cite{Barwise_book}, which includes the full scheme of foundation, our
version of $\mathsf{KP}$ is axiomatized by a set of  $\Pi_3$-sentences (as noted earlier).


With the above preliminaries in place, part (a) of Theorem 23 and the $\Pi_3$-axiomatizability of $\mathsf{KP}$ make it clear that for $n \geq 3$, $K_{n}(\mathcal{M})\models \mathsf{KP}$. This assures us that there is a $\Sigma_{n}$-satisfaction predicate in $K_{n}(\mathcal{M})$ for $\Sigma_{n}$-formulae. Consider the function $f$ that maps (the code of) each $\Sigma_{n}$-formula in $K_{n}(\mathcal{M})$ to $0$, if there is no element that satisfies $\sigma(x)$, and otherwise to the least $<_{\mathsf{L}}$-element satisfying $\sigma(x)$.  Thanks to the availability of a $\Sigma_{n}$-definable satisfaction predicate in $K_{n}(\mathcal{M})$ for $\Sigma_{n}$-formula, and the $\Sigma_1$-definability of $<_{\mathsf{L}}$, the graph of $f$ is readily seen to be defined by a $\Sigma_{n+1}$-formula. Note that the domain of $f$ is a set in $K_{n}(\mathcal{M})$ but its range is the whole of $K_{n}(\mathcal{M})$. Thus $\mathsf{Coll}(\Sigma_{n+1}$)-fails in $K_{n}(\mathcal{M})$. This concludes the proof of part (c) Theorem \ref{Set_theoretical_analogue_of_Paris_Kirby}.
\end{proof}

At the very beginning of the proof of Theorem \ref{Failure_of_tightness_for_bounded_ZF}, we saw that our model $\M_{0}$ of $\mathsf{ZF+V=L}$ not only is interpretable in $\mathbb{N}$, but also has the stronger property that the elementary diagram of $\M_{0}$ is arithmetical. Hence the model $K_n(\M_{0})$ is also interpretable in $\mathbb{N}$. The next result shows that, conversely, $\mathbb{N}$ is interpretable in $K_n(\M_0)$ when $n\geq 3$.
\begin{lemat}\label{key_lemma} The standard cut $\omega$ is definable in $K_n(\mathcal{M}_0)$ for $n\geq 3$.
\end{lemat}

\begin{proof} Note that $K_n(\mathcal{M}_0)$ is $\omega$-nonstandard since $\M_0$ satisfies $\lnot \mathsf{Con_{ZF}}$, and the least proof of inconsistency of $\mathsf{ZF}$ in $\M$ has a $\Sigma_1$-definition and thus belongs to $K_n(\mathcal{M}_0)$. So, similar to the proof of Theorem 23 of \cite{Enayat_Łełyk_2024}, and relying on part (b) of Theorem \ref{Set_theoretical_analogue_of_Paris_Kirby}, we have:
    $i$ is a nonstandard member of $\omega^ {K_{n}(\mathcal{M}_0)}$ iff every element of $K_{n}(\mathcal{M}_0)$ is definable by a $\Sigma_n$-formula below $i$. This is first order expressible, thanks to the definability of a $\Sigma_{n}$-satisfaction predicate in $K_{n}(\mathcal{M}_0)$. 
\end{proof}

\begin{proof}[Proof of Theorem \ref{N_and_K_n_are_biint_for_ZF+V=L}]
As noted earlier, we have access to an interpretation $\mathcal{I}$ such that $\mathbb{N}~{\trianglelefteq^{\mathcal{I}}} ~\mathcal{M}_0$. In the other direction, Lemma \ref{key_lemma} gives us an interpretation $\mathcal{J}$ such that $\mathcal{M}_0~{\trianglelefteq^{\mathcal{J}}} ~\mathbb{N}$. An identical reasoning as the one used in the proof of part (a) of Theorem 24 of \cite{Enayat_Łełyk_2024} shows that the interpretations $\mathcal{I}$ and $\mathcal{J}$ witness the bi-interpretability of $\mathbb{N}$ and $K_n(\mathcal{M})$.

Part (b) of Theorem \ref{N_and_K_n_are_biint_for_ZF+V=L} then follows since the proof of bi-interpretability in part (a) is uniform.
 
 \end{proof}
With Theorem \ref{N_and_K_n_are_biint_for_ZF+V=L} at hand, we can finally wrap up the proof of Theorem \ref{Failure_of_tightness_for_bounded_ZF}. By Theorem \ref{Set_theoretical_analogue_of_Paris_Kirby}, given $2\leq n<k$, $\mathbb{N}$ is bi-interpretable with a model of $\mathsf{ZF}_{\Pi _{n}}+\lnot \mathsf{Coll}
(\Sigma _{n+1})$, and also $\mathbb{N}$ is bi-interpretable with a model of $\mathsf{ZF}_{\Pi _{k}}$. It is easy to see that $ \mathsf{Coll}(\Sigma_{n})$ is of complexity at most $\Pi_{n+3}$ for all $n$. Thus by choosing $k\geq n+3$, $\mathsf{Coll(}\Sigma_{n}) \in \mathsf{ZF}_{\Pi _{k}}$, which in turn makes it clear that $\mathsf{ZF}_{\Pi_n}$ is not solid.\footnote{Indeed, here we can even choose $k\geq n+2$. This is because $\mathsf{Coll}(\Sigma_{n+1})$ is equivalent to $\mathsf{Coll}(\Pi_n)$ (using the trick of collapsing consecutive existential quantifiers), and as verified in the proof of \cite[Lemma 3.1] {McKenzie-Kaufmann-Clote}, $\mathsf{Coll}(\Pi_{n})$ can be expressed as a collection of $\Pi_{n+2}$-sentences.} To see that $\mathsf{ZF}_{\Pi_n}$ is not tight, we use a similar reasoning, using   part (b) of Theorem \ref{N_and_K_n_are_biint_for_ZF+V=L}. This concludes the proof of Theorem \ref{Failure_of_tightness_for_bounded_ZF}.
  \end{proof} 
\begin{uwaga} For the purposes of Theorem \ref{Failure_of_tightness_for_bounded_ZF}, we could have stated Theorem \ref{Set_theoretical_analogue_of_Paris_Kirby} for models of $\mathsf{ZF+V=L}$. The reason we opted to state the theorem for models of $\mathsf{ZF^-+V=L}$ is that this more general version is needed to establish Theorem 38 of \cite{Enayat_Łełyk_2024}. 
\end{uwaga}
  \section{Correction to Theorem 77 of \cite{Enayat_Łełyk_2024}}

  Theorem 77 of \cite{Enayat_Łełyk_2024} states that the scheme template $\tau_{\mathsf{Repl+Tarski}}$  is both g-solid and strongly internally categorical. 
  \begin{itemize}
      \item We withdraw our claim to the veracity of Theorem 77 of \cite{Enayat_Łełyk_2024} since one of the lemmas employed in its proof (Lemma 79 of \cite{Enayat_Łełyk_2024}), as we shall explain, has turned out to be false. At the time of this writing we have neither an alternative proof of Theorem 77 of \cite{Enayat_Łełyk_2024}, nor a counterexample to demonstrate its falsity.
      \item In \cite{Piotr+Mateusz2025} it is verified that the G\"odel-Bernays class theory ${\sf GB}$ proves that any two full models of $\tau_{\mathsf{Repl+Tarski}}$ are isomorphic, which according to the definitions from \cite{Piotr+Mateusz2025} amounts to establishing that $\tau_{\mathsf{Repl+Tarski}}$ is categorical in $\tau_{\mathsf{Repl}}$. We stress that the problem specified in the above bullet item is avoided by assuming the replacement axiom for all class functions in the metatheory (${\sf GB}$).
  \end{itemize}

  
  The following is the statement of the aforementioned problematic lemma:

  \begin{claim}[Incorrect] \label{problematic_lemma} Suppose $\mathcal{M}$ and $\mathcal{K}$ are models of $\mathsf{ZF}$ such that the universe $K$ of $\mathcal{K}$ is a subset of the universe $M$ of $\mathcal{M}$ (and therefore the membership relation $\in^{\mathcal{K}}$ of $\mathcal{K}$ is a subset of $M^2$). Moreover, assume that for every parametrically $\mathcal{M}$-definable subset $D$ of $M$, $(\mathcal{K},D\cap K)\models \mathsf{ZF}(P),$ where $P$
is a fresh unary predicate interpreted by $D\cap K$. \textbf{Then} precisely one of the following three conditions hold:
\begin{enumerate}
    \item [(1)] There is an $
(\mathcal{M}, \mathcal{K})$-definable isomorphism between $\mathcal{K}$ and $\left(
V_{\kappa },\in \right) ^{\mathcal{M}}$ for some strongly inaccessible cardinal $%
\kappa $ of $\mathcal{M}$.

\item [(2)] There is an $(\mathcal{M}, \mathcal{K})$-definable isomorphism between $\mathcal{K}$ and $\mathcal{M}$.

\item [(3)] There is an $
(\mathcal{M}, \mathcal{K})$-definable
embedding of $\mathcal{M}$ as a proper rank initial segment of $\mathcal{K}$.
\end{enumerate}
  
  \end{claim}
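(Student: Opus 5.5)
Since the statement is marked incorrect and Section 3 promises a counterexample, I will not try to prove it. The plan is to \emph{refute} it: build $\mathcal{M}$ and $\mathcal{K}$ that satisfy all the hypotheses but none of (1)--(3).

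\textbf{Construction.} Start with a countable $\mathcal{K}\models\mathsf{ZF}$ and let $\mathcal{M}$ be a \emph{conservative} elementary rank-end-extension of $\mathcal{K}$, given by the Keisler--Morley theorem (sharpened to conservativity as in the Enayat--Schmerl work on conservative extensions). Thus $\mathcal{K}\prec\mathcal{M}$, $K=V_I^{\mathcal{M}}$ for a cut $I$ of $\mathsf{Ord}^{\mathcal{M}}$ with no least ordinal above it, and for every parametrically $\mathcal{M}$-definable $D\subseteq M$ the trace $D\cap K$ is parametrically $\mathcal{K}$-definable. The hypothesis then holds at once: $(\mathcal{K},D\cap K)$ is a definitional expansion of a $\mathsf{ZF}$-model, so it satisfies $\mathsf{ZF}(P)$.

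\textbf{Ruling out the three alternatives.} Condition (1) fails for a simple reason. An isomorphism onto $(V_\kappa,\in)^{\mathcal{M}}$ must be the identity, because $\mathcal{K}$ is a transitive-in-$\mathcal{M}$ rank initial segment and both structures are well-founded relative to one another. So $K=V_\kappa^{\mathcal{M}}$, which would make $I=\kappa$ an ordinal of $\mathcal{M}$; this is impossible since $I$ is a proper cut with no least element above it.

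For (2) and (3) I would exploit the rigidity of the comparison. Suppose $f$ is an $(\mathcal{M},\mathcal{K})$-definable isomorphism $\mathcal{K}\cong\mathcal{M}$ in case (2), or an embedding of $\mathcal{M}$ onto a proper rank initial segment of $\mathcal{K}$ in case (3). In either case $f$ restricted to ordinals gives an $(\mathcal{M},K)$-definable order-preserving map between $\mathsf{Ord}^{\mathcal{M}}$ and the cut $I$, which lies strictly below part of $\mathsf{Ord}^{\mathcal{M}}$. Working inside $\mathcal{K}$ together with a predicate for a suitable $\mathcal{M}$-definable class, I would then try to derive a contradiction from the $\mathsf{ZF}(P)$ hypothesis. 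The intended contradiction is an ordinal $\alpha\in I$ with $f(\alpha)\neq\alpha$ witnessed inside $\mathcal{K}$, or a $\mathcal{K}$-definable cofinal map from an ordinal of $\mathcal{K}$ onto $I$.

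\textbf{Main obstacle.} The hard step is controlling definability in the \emph{pair} $(\mathcal{M},K)$. The predicate $K$ is not $\mathcal{M}$-definable, so conservativity alone does not tame $(\mathcal{M},K)$-definable maps. The first thing I would try is to choose $\mathcal{M}$ as a minimal-type (Gaifman-style) end extension generated by a single ordinal. Then the definable closure in $(\mathcal{M},K)$ can be analysed, and one can show that every $(\mathcal{M},K)$-definable map from $M$ into $K$ is eventually constant or bounded, by indiscernibility of the generating type. That rules out any definable embedding of $\mathcal{M}$ into $\mathcal{K}$, hence kills both (2) and (3).

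If this analysis proves too delicate, my fallback is to drop the end-extension picture. I would instead take $\mathcal{K}$ with ordinals incomparable to those of $\mathcal{M}$, for example $\mathcal{K}$ an elementary submodel that is not an initial segment. Such a pair can fail all three alternatives simply because no initial-segment relationship between $\mathcal{K}$ and $\mathcal{M}$ exists. The cost is that the $\mathsf{ZF}(P)$ hypothesis must then be rechecked for every $\mathcal{M}$-definable $D$.
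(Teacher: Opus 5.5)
\textbf{There is a genuine gap: your main construction does not exist.} There is no version of Keisler--Morley ``sharpened to conservativity'' for models of $\mathsf{ZFC}$. By the Enayat--Hamkins theorem ($\mathsf{ZFC}$ proves that $\mathsf{Ord}$ is not weakly compact for definable classes), no model of $\mathsf{ZFC}$ has a conservative elementary end extension. The reason is short:
\begin{enumerate}
\item $\mathsf{ZFC}$ provides a definable $\mathsf{Ord}$-tree with nonempty set-sized levels and no cofinal branch definable with parameters.
\item If $\mathcal{M}$ were a conservative elementary end extension of $\mathcal{K}$, pick a node $t$ of the tree, as computed in $\mathcal{M}$, on a level $c\notin K$.
\item By elementarity and end-extension, the levels indexed by ordinals of $\mathcal{K}$ are the same in both models.
\item So the trace on $K$ of the $\mathcal{M}$-definable set of predecessors of $t$ is a cofinal branch of the tree of $\mathcal{K}$, and conservativity would make it $\mathcal{K}$-definable.
\end{enumerate}
Refining $\mathcal{M}$ to a Gaifman-style minimal-type extension does not help. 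Your plan still needs $\mathcal{M}$ to be a conservative elementary end extension, since that is how you verify the $\mathsf{ZF}(P)$ hypothesis. For a choiceless $\mathcal{K}$ you cite no result that would supply one.

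Even granting the extension, you never actually eliminate the alternatives. The claim that the isomorphism in (1) must be the identity requires $\in$-induction for a class definable in the pair $(\mathcal{M},\mathcal{K})$. Neither the $\mathsf{ZF}(P)$ hypothesis nor conservativity provides this, since both concern only traces of $\mathcal{M}$-definable sets. For (2) and (3) you only describe an analysis you hope will succeed.

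\textbf{The missing idea} is to separate $\mathcal{K}$ and $\mathcal{M}$ by an isomorphism invariant. Then (1)--(3) fail for all maps whatsoever, and no definability analysis in $(\mathcal{M},\mathcal{K})$ is needed. The paper's construction:
\begin{enumerate}
\item Take $\mathcal{K}$ an $\omega$-standard model of $\mathsf{ZFC}$, $U\in\mathcal{K}$ a nonprincipal ultrafilter on $\omega$, and $\mathcal{M}$ the internal ultrapower of $\mathcal{K}$ by $U$, identifying $k$ with $[\underline{k}]_U$.
\item By the internal \L o\'s theorem, the trace on $K$ of $\{x:\mathcal{M}\models\varphi(x,[g]_U)\}$ is $\{k:\{n\in\omega:\varphi(k,g(n))\}\in U\}$, which is $\mathcal{K}$-definable from $g$ and $U$. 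So $\mathcal{M}$ is a conservative elementary (but not end) extension of $\mathcal{K}$, and the hypothesis holds.
\item $[\mathrm{id}]_U$ is a nonstandard element of $\omega^{\mathcal{M}}$, so $\mathcal{M}$ and $(V_\kappa,\in)^{\mathcal{M}}$ are $\omega$-nonstandard. By contrast, $\mathcal{K}$ and all its rank initial segments satisfying $\mathsf{ZF}$ are $\omega$-standard.
\item Any isomorphism, or embedding onto a rank initial segment, carries $\omega$ onto $\omega$, so none of (1)--(3) can hold.
\end{enumerate}

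Your fallback (an elementary submodel that is not an initial segment) points in this direction. However, the reason you give, that the inclusion $K\subseteq M$ is not an initial segment, does not exclude (2) or (3). Those conditions ask for some definable map, not for the inclusion itself to be one. You also leave the $\mathsf{ZF}(P)$ hypothesis unverified there.
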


We now present a counterexample to Claim \ref{problematic_lemma}. Let $\mathcal{K}$ be an $\omega$-standard model of $\mathsf{ZFC}$, and let $U$ be an element of $\mathcal{K}$ such that the statement ``$U$ is a nonprincipal ultrafilter over $\mathcal{P}(\omega)$" holds in $\mathcal{K}$. Let $\M$ be the internal ultrapower of $\mathcal{K}$ modulo $U$ whose elements are of the form $[f]_{U}$ for some $f$ in $\mathcal{K}$ such that $\mathcal{K}\models f:\omega \rightarrow V$, and  $[f]_{U}$ is the $U$-equivalence class of $f$. It is well-known that the following analogue of the \L o\'{s} Theorem for ultraproducts \cite{Chang+Keisler} holds in this context (thanks to the availability of the axiom of choice in $\mathcal{K}$).

\begin{tw}\label{thm_ultraproducts}For all $m$\textit{-ary formulae} 
$\varphi (x_{0},\cdot \cdot \cdot ,x_{m-1})$ in the first order language of set theory, and all $m$-tuples $\left\langle f_{i}:i<m\right\rangle $, where  $\mathcal{K}\models f_i:\omega \rightarrow V$ for each $i<m$, the following holds:

\begin{center}
$\mathcal{M}\models \varphi ([f_{0}]_{U},\cdot \cdot \cdot
,[f_{m-1}]_{U})$~~~iff~~~$\mathcal{K}\models\bigg[\left\{ n\in \omega:  \varphi \left( f_{0}(n ),\cdot \cdot \cdot
,f_{m-1}(n )\right) \right\} \in U\bigg]$.\smallskip
\end{center}
\end{tw}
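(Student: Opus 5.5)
The plan is the standard proof of Łoś's theorem, by induction on the complexity of $\varphi$, carried out inside $\mathcal{K}$. Throughout, write $\|\varphi(f_0,\dots,f_{m-1})\|$ for the set $\{n\in\omega:\varphi(f_0(n),\dots,f_{m-1}(n))\}$. Before starting, one has to check that this set is actually an element of $\mathcal{K}$. In the $\omega$-standard model $\mathcal{K}$, the object $\varphi$ is a standard (metatheoretic) formula, so Separation in $\mathcal{K}$, applied with parameters $f_0,\dots,f_{m-1}$, yields this subset of $\omega$ as an element of $\mathcal{K}$. Hence the right-hand side of the statement makes sense. I also take as given the definitions of the ultrapower structure $\mathcal{M}$:
\begin{itemize}
\item $[f]_U=[g]_U$ iff $\|f=g\|\in U$;
\item $[f]_U\in^{\mathcal{M}}[g]_U$ iff $\|f\in g\|\in U$.
\end{itemize}
Both relations are well defined because $U$ is a filter in the sense of $\mathcal{K}$. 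If $\mathcal{K}$ lacks a set of all functions representing a class, one uses Scott's trick, which is available in $\mathcal{K}$. Since $\mathcal{M}$ is described externally, I take $M$ to be the set of such equivalence classes as computed externally; the elements are still coded by $\mathcal{K}$-functions, so nothing changes in the argument.

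The atomic case holds directly by these definitions. For negation, I use that $\mathcal{K}$ believes $U$ is an ultrafilter on $\omega$. Thus for any $X\subseteq\omega$ with $X\in\mathcal{K}$, exactly one of $X$ and $\omega\setminus X$ lies in $U$. Applying this to $X=\|\varphi\|$, and noting $\|\lnot\varphi\|=\omega\setminus\|\varphi\|$ (computed in $\mathcal{K}$), gives the negation step. For conjunction, I use $\|\varphi\wedge\psi\|=\|\varphi\|\cap\|\psi\|$ together with closure of $U$ under finite intersections and upward closure. These facts are internal to $\mathcal{K}$, but membership in $U$ is the same whether viewed inside or outside $\mathcal{K}$, since $U\in\mathcal{K}$ and "$X\in U$" means $X\in^{\mathcal{K}}U$.

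The existential step is the main obstacle, and it is where choice in $\mathcal{K}$ is used. Suppose first that $\mathcal{M}\models\exists x\,\varphi(x,[f_0]_U,\dots)$. Then some $[g]_U$ witnesses it, and the induction hypothesis gives $\|\varphi(g,\vec f)\|\in U$. This set is contained in $\|\exists x\,\varphi(x,\vec f)\|$, so the latter is in $U$. Conversely, suppose $X=\|\exists x\,\varphi(x,\vec f)\|\in U$. Working in $\mathcal{K}$, for each $n\in X$ the class of witnesses is nonempty. I cut it down to a set using Reflection/rank: take the witnesses of least rank, which form a set $W_n\subseteq V_{\alpha_n}$. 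Replacement in $\mathcal{K}$ then yields the sequence $\langle W_n:n\in X\rangle$. AC in $\mathcal{K}$ then provides a choice function, giving $g:\omega\to V$ in $\mathcal{K}$ with $g(n)\in W_n$ for $n\in X$ and $g(n)=0$ otherwise. Then $\|\varphi(g,\vec f)\|\supseteq X\in U$. By the induction hypothesis $\mathcal{M}\models\varphi([g]_U,\dots)$, and hence $\mathcal{M}\models\exists x\,\varphi$.

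I expect the existential step to need the most care. The witnesses must be collected as a set via Replacement before AC applies, and the formula $\varphi$ being standard is what lets these schemes be invoked inside $\mathcal{K}$. The other connectives follow as above, and the induction completes the proof.
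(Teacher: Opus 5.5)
Your proposal is correct and is exactly the standard \L{}o\'{s} induction on $\varphi$, carried out inside $\mathcal{K}$. The axiom of choice in $\mathcal{K}$ is used only in the existential step, which is precisely what the paper invokes when it calls the result well known. There are two harmless slips: the $\omega$-standardness of $\mathcal{K}$ plays no role in showing $\|\varphi(f_0,\dots,f_{m-1})\|\in\mathcal{K}$, since Separation applies to any metatheoretic formula; and witnesses of least rank $\alpha_n$ lie in $V_{\alpha_n+1}$ rather than $V_{\alpha_n}$.
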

\noindent For each $k\in\mathcal{K}$ let $\underline{k}$ be the constant function in $\mathcal{K}$ whose domain is $\omega$ and whose constant value is $k$. The above theorem assures us that the map $j:\mathcal{K}%
\rightarrow \mathcal{M}$ given by $j(k)=[\underline{k}]_{U}$ is an
elementary embedding, thus by identifying $\mathcal{K}$\ with the image of $%
j $, we can construe $\mathcal{M}$ as an elementary extension of $\mathcal{K}$.\footnote{Using Scott's trick, $\mathcal{M}$ is parametrically interpretable in $\mathcal{K}$ via an identity-preserving interpretation, but this fact does not play a role in the proof here.}

Let $\mathrm{id}$ be the identity map from $\omega$ to itself (in the sense of $\mathcal{K}$). Since $U$ was chosen to be nonprincipal, every member of $U$ is seen by $\mathcal{K}$ to be an unbounded subset of $\omega$. So by Theorem 7, $[\mathrm{id}]_{U}$ is above all the standard natural numbers and thus $\mathcal{M}$ is $\omega$-nonstandard. It is also easy to see, using Theorem \ref{thm_ultraproducts}, and the fact that $U$ is a member of $\mathcal{K}$, that $\mathcal{K}$ is a conservative elementary extension of $\mathcal{M}$, i.e., the intersection of every parametrically definable subset of $\M$ with $\mathcal{K}$ is parametrically definable in $\mathcal{K}$.  This latter property assures us that $\mathcal{K}$ and $\mathcal{M}$ satisfy the hypothesis of Claim \ref{problematic_lemma}. However, since $\mathcal{K}$ is $\omega$-standard and $\mathcal{M}$ is $\omega$-nonstandard, none of the three conditions of Claim \ref{problematic_lemma} are satisfied.

  \section{Recent advances}

  In this section, we briefly bring attention to recent progress on topics explored in \cite{Enayat_Łełyk_2024}. 
  \begin{enumerate}

 \item  In Theorem 16 of \cite{Enayat_Łełyk_2024}, we showed that two theories  $\mathsf{Z}_2 + \Pi^1_{\infty}$-$\mathsf{AC}$ and $\mathsf{ZF^{-}}+ \forall x~\left\vert {x}\right\vert  \leq \aleph_0$ are not definitionally equivalent (although they are bi-interpretable). After the publication of our paper, an alternative proof of this fact was given by Meadows and Chen \cite{Meadows+Chen_teasing_apart} that has the advantage of being conceptually simpler, and contrary to our proof, it does not rely on the consistency of the existence of an inaccessible cardinal with $\mathsf{ZFC}$. Their proof is based on the insight that every model of $\mathsf{Z}_2$ carries a definable global linear order, in contrast to $\mathsf{ZF^{-}}+ \forall x~\left\vert {x}\right\vert  \leq \aleph_0$, which is shown in \cite{Meadows+Chen_teasing_apart} to have models (obtained by forcing) that lack a definable global linear order. 
 

 \item The recent manuscript \cite{Piotr+Mateusz+Leszek2025} of Gruza, Kołodziejczyk, and Łełyk contains a number of results in which various proper subtheories of $\PA$ are constructed that exemplify the separation of various categoricity-like notions. The paper also establishes that for any fixed $n\in\omega$, there is a solid subtheory of $\PA$ that extends $\mathrm{I}\Sigma_n$ in which $\PA$ is not interpretable, thereby providing a strong positive answer to  Question B of \cite{Enayat_Łełyk_2024} for $T=\PA.$
 
 \item The recent manuscript \cite{Piotr+Mateusz2025} of Gruza and Łełyk further develops the work in \cite{Enayat_Łełyk_2024} concerning various `flavors' of internal categoricity. The following summarizes the highlights of the manuscript: 
 \begin{enumerate}
     \item Systematic study of the dependence of the categoricity results on the metatheory, and on the inclusion of parameters in the schematic presentations of theories.
     \item Formalization of the notion of "categoricity up to an ordinal height", and establishing that the means which are sufficient to establish the categoricity of second-, third-, fourth- ... order arithmetic are not sufficient to establish the categoricity of $\mathsf{ZF}$ up to an ordinal height.
     \item Establishing that if $\tau$ is a strongly internally categorical scheme (in the sense of \cite{Enayat_Łełyk_2024}), then $\tau[\mathcal{L}_{\tau}]$ (i.e. the theory consisting of all instantiations of $\tau$ with $\mathcal{L}_{\tau}$-formulae) is solid.
 \end{enumerate}

 \item In recent work, Meadows \cite{Meadows_int_cat_Steel} has explored the internal categoricity of Steel's theory of the multiverse. 

 \item The categorical-like notion of tightness has come to play a role in framing foundational issues about set theory, as exemplified in recent work of Barton \cite{Barton_metasemantics}, and Meadows \cite{Meadows_Sheep2025}.

 \item Elliot Glazer \cite{On_ZCR} has sketched out a proof of solidity of the fragment $\mathsf{ZCR}$ of $\mathsf{ZF}$, obtained by adding the axiom of ranks (denoted $\mathsf{R}$ here) to $\mathsf{ZC}$, where $\mathsf{ZC}$ is Zermelo set theory plus the axiom of choice. The axiom of ranks states that the universe can be written as the union of sets of the form $V_{\alpha}$, as $\alpha$ ranges over the ordinals. It remains open whether $\mathsf{ZR}$ is a solid theory.
\item The first-named-author has recently posted \cite{Enayat_Visserian_corrections} in relation to the solidity of the Kelley-Morse theory of classes (and its higher order variants).

  \end{enumerate}

\bibliographystyle{plain}
\bibliography{sonata.bib}

\end{document}